\definecolor{linkblue}{rgb}{0.1,0.1,0.8}
\newtheorem{theorem}{Theorem}
\newtheorem{lemma}[theorem]{Lemma}
\newtheorem{definition}[theorem]{Definition}
\newcommand{\ignore}[1]{}
\newcommand{\N}{\mathbb{N}}
\newcommand{\R}{\mathbb{R}}
\newcommand{\Z}{\mathbb{Z}}
\renewcommand{\epsilon}{\varepsilon}
\begin{document}

\title{Probabilistic Lower Bounds for the Discrepancy of Latin Hypercube Samples}

\author{Benjamin Doerr$^1$ \and Carola Doerr$^2$ \and Michael Gnewuch$^3$}

\date{
$^1$\'Ecole Polytechnique, LIX - UMR 7161, CS35003, 91120 Palaiseau, France\\
$^2$Sorbonne Universit\'es, UPMC Univ Paris 06, CNRS, LIP6 UMR 7606, 4 place Jussieu 75005 Paris, France\\
$^3$Christian-Albrechts-Universit\"at Kiel, Mathematisches Seminar, Ludewig-Meyn-Str. 4, 24098 Kiel, Germany
}
\maketitle

\abstract{
We provide probabilistic lower bounds for the star discrepancy of Latin hypercube samples. 
These bounds are sharp in the sense that they match the recent probabilistic upper bounds for the star discrepancy of Latin hypercube samples proved in [M.~Gnewuch, N.~Hebbinghaus. {\sl Discrepancy bounds for a class of negatively dependent random points including Latin hypercube samples}. Preprint 2016.]. Together, this result and our work implies that the discrepancy of Latin hypercube samples differs at most by constant factors from the discrepancy of uniformly sampled point sets.
}

\section{Introduction}

Discrepancy measures are well established and play an important role in fields like computer graphics,
experimental design, pseudo-random number generation, stochastic programming, numerical integration or, more general, stochastic simulation.

The prerelevant and most intriguing discrepancy measure is arguably the \emph{star discrepancy}, which is defined in the following way:

Let $P\subset [0,1)^d$ be an $N$-point set. (We always understand an ``$N$-point set'' as a ``multi-set'', i.e., it consists of $N$ points, but these points do not have to be pairwise different.) We define the \emph{local discrepancy} of $P$ with respect to a Lebesgue-measurable test set $T\subseteq [0,1)^d$ by
\begin{equation*}
D_N(P,T) := \bigg| \frac{1}{N} |P\cap T| - \lambda^d(T) \bigg|,
\end{equation*}
where $|P\cap T|$ denotes the size of the finite set $P\cap T$ (again understood as a multi-set) and $\lambda^d$ the $d$-dimensional Lebesgue measure on $\R^d$.
For  vectors $x= (x_1,x_2,\ldots, x_d)$, $y= (y_1, y_2, \ldots, y_d) \in \R^d$ 
we write 
\begin{equation*}
[x,y) := \prod^d_{j=1} [x_j,y_j) = \{z\in \R^d \,|\, x_j \le z_j < y_j \hspace{1ex}\text{for $j = 1,\ldots, d$}\}.
\end{equation*}
The \emph{star discrepancy} of $P$ is then given by
\begin{equation*}
D^*_N(P) := \sup_{y\in [0,1]^d} D_N(P,[0,y)).
\end{equation*}
We will refer to the sets $[0,y)$, $y\in [0,1]^d$, as \emph{anchored test boxes}.

The star discrepancy is intimately related to quasi-Monte Carlo integration via the Koksma-Hlawka inequality: For every $N$-point set $P\subset [0,1)^d$ we have
\begin{equation*}
\left| \int_{[0,1)^d} f(x) \,d\lambda^d(x) - \frac{1}{N} \sum_{p\in P} f(p) \right| 
\le D_N^*(P) {\rm Var}_{\rm HK}(f), 
\end{equation*}
where ${\rm Var}_{\rm HK}(f)$ denotes the variation in the sense of Hardy and Krause see, e.g., \cite{Nie92}. 
The Koksma-Hlawka inequality is sharp, see again \cite{Nie92}.
(An alternative version of the Koksma-Hlawka inequality can be found in \cite{HSW04}; it says that the worst-case error 
of equal-weight cubatures based on a set of integration points $P$ over the norm unit ball of some Sobolev space is exactly the star discrepancy of $P$.)
The Koksma-Hlawka inequality shows that equal-weight cubatures based on integration points with small star discrepancy yield  small  integration errors. (Deterministic equal-weight cubatures are commonly called \emph{quasi-Monte Carlo algorithms}; for a recent survey we refer to \cite{DKS13}.)  
For the very important task of high-dimensional integration, which occurs, e.g., in mathematical finance, physics or quantum chemistry, it is therefore of interest to know sharp bounds for the smallest achievable star discrepancy
and to be able to construct integration points that satisfy those bounds. To avoid the ``curse of dimensionality'' it is crucial that such bounds scale well with respect to the dimension.

The best known upper and lower bounds for the smallest achievable star discrepancy with explicitly given dependence on the number of sample points~$N$ as well as on the dimension~$d$ are of the following form: For all $d,N \in \N$ there exists
an $N$-point set $P\subset [0,1)^d$ satisfying
\begin{equation}\label{hnww01}
D^*_N(P) \le C \sqrt{\frac{d}{N}}
\end{equation}
for some universal constant $C>0$, while for all $N$-point sets $Q\subset [0,1)^d$ it holds that
\begin{equation}\label{hin04}
D^*_N(Q) \ge \min \left\{c_0, c \frac{d}{N} \right\},
\end{equation}
where $c_0,c \in (0,1]$ are suitable constants. The upper bound (\ref{hnww01}) was proved 
by Heinrich et al. \cite{HNWW01} without providing an estimate for the universal constant $C$.
The first estimate for this constant was given by Aistleitner \cite{Ais11}; he showed that $C\le 9.65$. This estimate has recently been improved to $C \le 2.5287$ in~\cite{GH16}. 
All these results are based on probabilistic arguments and do not provide an explicit point construction that satisfies~\eqref{hnww01}.
The lower bound (\ref{hin04}) was established by Hinrichs \cite{Hin04}.
Observe that there is a gap between the upper bound (\ref{hnww01}) and the lower bound (\ref{hin04}). 
In \cite[Problem~1 \& 2]{Hei03} Heinrich asked the following two questions:
\begin{itemize}
\item[(a)] Does any of the various known constructions of low discrepancy point sets
satisfy an estimate like (\ref{hnww01}) or at least some slightly weaker estimates? 
\item[(b)] What are the correct sharp bounds for the smallest achievable star discrepancy? 
\end{itemize}
It turned out that these two questions are very difficult to answer.

To draw near an answer, it was proposed in \cite{GH16} to study the following related questions:
\begin{itemize}
\item[(c)] What kind of randomized point constructions 
satisfy~(\ref{hnww01}) in expectation and/or with high probability? 
\item[(d)] Can it even be shown, by probabilistic constructions, that the upper bound~\eqref{hnww01} is too pessimistic? 
\end{itemize}
As mentioned, the upper bound~\eqref{hnww01} was proved via probabilistic arguments. Indeed,
Monte Carlo points, i.e., 
independent random points uniformly distributed in $[0,1)^d$, satisfy this bound with high probability.
In~\cite{Doe13} it was rigorously shown that the star discrepancy of Monte Carlo point sets $X$ behaves like the right hand side in (\ref{hnww01}). More precisely,  there exists a constant $K>0$ such that the expected star discrepancy of $X$ is bounded from below by
\begin{equation}\label{benji1}
{\bf {\rm E}} [D^*_N(X)] \ge K \sqrt{\frac{d}{N}}
\end{equation}
and additionally we have the probabilistic discrepancy bound
\begin{equation}\label{benji2}
{\bf {\rm P}}\! \left( D^*_N(X) < K\sqrt{\frac{d}{N}} \right) \le \exp(-\Omega(d)).
\end{equation}
The upper bound~\eqref{hnww01} is thus sharp for Monte Carlo points, showing that they cannot be employed to improve it.

What about other randomized point constructions? In \cite{GH16} it is shown that so-called Latin hypercube samples
satisfy the upper bound (\ref{hnww01}) with high probability, see Theorem~\ref{Theo_Nils_Michi} below.
In this note we show that this estimate is tight. More precisely, we prove that the bounds \eqref{benji1} and \eqref{benji2} for Monte Carlo point sets also apply to Latin hypercube samples.

\section{Probabilistic Discrepancy Bounds for Latin Hypercube Sampling}
\label{LHS}

For $N\in \N$ we denote the set $\{1,2,\ldots,N\}$ by $[N]$.
The definition of Latin hypercube sampling presented below was introduced by McKay, Beckman, 
and Conover \cite{MBC79} for the design of computer experiments.   

\begin{definition}\label{Def_LHS}
A \emph{Latin hypercube sample} (LHS) $(X_n)_{n\in [N]}$ in $[0,1)^d$ is of the form
\begin{equation*}
X_{n,j} = \frac{\pi_j(n) - u_{n,j}}{N},
\end{equation*}
where $X_{n,j}$ denotes the $j$th coordinate of $X_n$, $\pi_j$ is a permutation
of $[N]$ that is chosen uniformly at random, and $u_{n,j}$ obeys the uniform distribution on $[0,1)$.
The $d$ permutations $\pi_j$ and the $dN$ random variables $u_{n,j}$ are mutually independent. 
\end{definition}

The following result was proved in \cite{GH16}.

\begin{theorem}\label{Theo_Nils_Michi}
Let $d, N\in \N$, and let $X= (X_n)_{n\in[N]}$ be a Latin hypercube sample in $[0,1)^d$. 
Then for every $c>0$
\begin{equation*}
{\bf {\rm P}}\! \left(D^*_N(X) \leq c\sqrt{\frac{d}{N}} \, \right) \ge 1-\exp \left(-(1.6741\, c^2 -11.7042)\, d \right).
\end{equation*}
In particular, there exists a realization $P\subset [0,1)^d$ of $X$ such that
\begin{equation*}
D^*_N(P) \leq 2.6442 \cdot\sqrt{\frac{d}{N}}
\end{equation*}
and the probability that $X$ satisfies
\begin{equation*}
D^*_N(X) \leq 3\cdot\sqrt{\frac{d}{N}}
\hspace{3ex}\text{and}\hspace{3ex}
D^*_N(X) \leq 4\cdot\sqrt{\frac{d}{N}}
\end{equation*}
is at least $0.965358$ and $0.999999$, respectively. 
\end{theorem}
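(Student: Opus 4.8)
The plan is to prove the high-probability estimate $D^*_N(X)\le c\sqrt{d/N}$ along the standard route for star-discrepancy tail bounds --- reduce the supremum over all anchored test boxes to a finite collection of boxes via a bracketing cover, control each of these boxes by an exponential concentration inequality, and combine by a union bound --- with the two modifications forced by the Latin hypercube structure: the concentration inequality must be one that survives the dependence built into an LHS, and the scales must be chained dyadically so that the constants come out as small as claimed. First I would recall the combinatorial tool. A \emph{$\delta$-bracketing cover} of $[0,1]^d$ is a finite family of pairs $[v,w]$, $v\le w$ coordinatewise, with $\lambda^d([0,w))-\lambda^d([0,v))\le\delta$, such that every $y\in[0,1]^d$ lies in some bracket of the family; Gnewuch's construction furnishes such a cover $\Gamma_\delta$ of cardinality of order $(\delta^{-1})^d$ (his explicit bound being $|\Gamma_\delta|\le\frac{d^d}{d!}(\delta^{-1}+1)^d\le(e\delta^{-1}+e)^d$). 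Since $y\mapsto|P\cap[0,y)|$ is monotone, if $y$ lies in the bracket $[v,w]$ then $D_N(X,[0,y))\le\max\{D_N(X,[0,v)),D_N(X,[0,w))\}+\delta$, so it is enough to control $D_N(X,[0,z))$ simultaneously over the bracket endpoints $z$.

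The probabilistic core is as follows. Fix an anchored box $[0,z)$ and write $|P\cap[0,z)|=\sum_{n=1}^{N}\eta_n$ with $\eta_n=\mathbf{1}_{X_n\in[0,z)}$. Each $\eta_n$ is Bernoulli with mean $p_z$, where $|p_z-\lambda^d([0,z))|=O(1/N)$ is the rounding coming from the $u_{n,j}$, and --- this is the structural fact about Latin hypercube samples that I would import from \cite{GH16} --- the indicators $\eta_1,\dots,\eta_N$ are negatively dependent, so that the Bernstein-type tail bounds valid for sums of independent Bernoulli variables carry over:
\[
\Pr\!\left(\Big|\tfrac1N\sum_{n=1}^{N}\eta_n-p_z\Big|\ge s\right)\le 2\exp\!\left(-\frac{Ns^2}{2(\sigma_z^2+s/3)}\right),\qquad \sigma_z^2=p_z(1-p_z)\le\min\!\left\{\lambda^d([0,z))+O(1/N),\tfrac14\right\}.
\]
The feature to exploit in order to reach a small constant is that the variance proxy $\sigma_z^2$ may be taken as small as $\lambda^d([0,z))$, not merely $1/4$.

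Finally I would run the chaining, which is needed because a single cover does not suffice: making the bracketing error $\delta$ comparable to $c\sqrt{d/N}$ forces $|\Gamma_\delta|$ to be of order $(N/d)^{d/2}$, far too large for a union bound against $\exp(-\Omega(c^2d))$. Instead fix a geometric sequence of scales $\delta_k=2^{-k}\delta_0$, $k=0,\dots,K$, stopping at $\delta_K$ of order $1/N$ (below which the rounding term dominates and is absorbed). Bracket a general $y$ successively in $\Gamma_{\delta_0},\dots,\Gamma_{\delta_K}$ so that $D_N(X,[0,y))$ is at most $D_N(X,[0,w^{(K)}))$ plus a telescoping sum of increments $D_N(X,[0,w^{(k)}))-D_N(X,[0,w^{(k-1)}))$, each of which involves only the difference box $[0,w^{(k)})\setminus[0,w^{(k-1)})$ of volume at most $\delta_{k-1}$; apply the inequality above to each increment with variance proxy $\sigma^2\le\delta_{k-1}$ and deviation $s_k$, noting that there are at most $|\Gamma_{\delta_{k-1}}|\cdot|\Gamma_{\delta_k}|\le\exp(O(kd))$ increments at level $k$. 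A union bound over all levels and increments gives
\[
\Pr\!\left(D^*_N(X)>c\sqrt{d/N}\right)\le\sum_{k}\exp(O(kd))\cdot 2\exp\!\left(-\Omega\!\left(\frac{Ns_k^2}{\delta_{k-1}}\right)\right),
\]
and splitting $c\sqrt{d/N}=\sum_k s_k$ with $s_k$ roughly proportional to $\sqrt{\delta_{k-1}\,d/N}$ times a slowly growing weight in $k$ (so that the $\exp(O(kd))$ factor is beaten with room to spare) collapses the right-hand side to $\exp(-(ac^2-b)d)$ for explicit $a,b$. The three ``in particular'' claims are then pure arithmetic: $1.6741c^2-11.7042>0$ holds for $c>\sqrt{11.7042/1.6741}\approx 2.644$, so a realization $P$ with $D^*_N(P)\le 2.6442\sqrt{d/N}$ exists, and substituting $c=3$ and $c=4$ (with $d\ge 1$) into $1-\exp(-(1.6741c^2-11.7042))$ produces the quoted success probabilities.

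I expect the genuine work to lie entirely in the constants, with two delicate points. The first is to verify that the negative dependence of an LHS is strong enough to license the Bernstein inequality \emph{with} the small variance proxy $\sigma^2\le\delta_{k-1}$ at every scale, rather than only a Hoeffding bound with proxy $1/4$; without this the leading constant cannot be pushed down to $1.6741$. The second is to carry the optimization over $\delta_0$, the geometric ratio, and the weights $\{s_k\}$ through to a clean closed form that yields the stated numbers. The cover construction, the monotonicity and telescoping steps, and the concluding union bound are routine once these two ingredients are in place.
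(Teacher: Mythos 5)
The paper itself contains no proof of this theorem: it is quoted verbatim from the reference \cite{GH16}, so there is no in-paper argument to compare yours against. That said, your outline is the same route that \cite{GH16} takes --- Aistleitner-style dyadic chaining over bracketing covers, combined with the structural fact that the indicators $\mathbf{1}_{X_n\in[0,z)}$ of a Latin hypercube sample are negatively dependent, which licenses Chernoff--Hoeffding/Bernstein-type tail bounds with the box volume as variance proxy --- and your arithmetic for the ``in particular'' claims is correct ($c=2.6442$ just clears $\sqrt{11.7042/1.6741}\approx 2.6441$, and $c=3,4$ give exponents $3.3627$ and $15.0814$, reproducing $0.965358$ and $0.999999$). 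Two caveats. First, as a proof your text is an outline, not a derivation: the two points you yourself flag as ``delicate'' --- establishing the Bernstein inequality for negatively dependent Bernoulli variables with variance proxy $\delta_{k-1}$ at every scale, and optimizing $\delta_0$, the ratio, and the weights $s_k$ to land on the constants $1.6741$ and $11.7042$ --- are the entire quantitative content of the theorem, and without executing them you have proved a bound of the form $\exp(-(ac^2-b)d)$ for \emph{some} $a,b$, not the stated one. Second, a minor slip: for a Latin hypercube sample each $X_n$ is exactly uniformly distributed on $[0,1)^d$ (the coordinate $(\pi_j(n)-u_{n,j})/N$ is uniform on $[0,1)$ and the coordinates are independent across $j$), so $p_z=\lambda^d([0,z))$ with no $O(1/N)$ rounding error; the rounding you mention does not arise at the level of a single point's marginal, only in the joint behaviour, and invoking it obscures rather than helps the argument.
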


The result of our note complements the previous theorem and shows that it is sharp from a probabilistic point of view. 


\begin{theorem}\label{main_theo}
There exists a constant $K>0$ such that for all 
$d, N\in \N$ with $d \ge 2$ and $N \ge 1,600d$,  the discrepancy of a Latin hypercube sample $X = (X_n)_{n\in[N]}$ in $[0,1)^d$ satisfies
\begin{equation*}
{\bf {\rm E}} [D^*_N(X)] \ge K \sqrt{\frac{d}{N}}
\end{equation*}
and
\begin{equation*}
{\bf {\rm P}}\! \left( D^*_N(X) < K\sqrt{\frac{d}{N}} \right) \le \exp(-\Omega(d)).
\end{equation*}
\end{theorem}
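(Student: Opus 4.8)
The plan is to adapt the lower-bound strategy used for Monte Carlo points in \cite{Doe13}: exhibit a single, carefully chosen anchored test box $[0,y)$ on which the local discrepancy $D_N(X,[0,y))$ is, with constant probability, at least of order $\sqrt{d/N}$, and then argue that this alone forces $\E[D^*_N(X)] \ge K\sqrt{d/N}$ and the stated tail bound. Concretely, I would take $y = (t,t,\dots,t)$ for a well-chosen $t \in (0,1)$ (depending on $d/N$, and of order $1 - \Theta(\sqrt{d/N})$, so that $\lambda^d([0,y)) = t^d$ is bounded away from $0$ and $1$), and study the count $|X \cap [0,y)| = \sum_{n\in[N]} \mathbf{1}\{X_n \in [0,y)\}$. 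The point $X_n$ lies in $[0,y)$ iff $\pi_j(n) - u_{n,j} < Nt$ for all $j$, i.e.\ essentially iff $\pi_j(n) \le \lceil Nt\rceil$ for every coordinate $j$ (up to the boundary fuzz contributed by the $u_{n,j}$, which only affects one value of $\pi_j(n)$ per coordinate and is easy to control). So the problem reduces to a purely combinatorial one about random permutations: letting $m = \lfloor Nt\rfloor$, one asks how $\sum_n \prod_j \mathbf{1}\{\pi_j(n)\le m\}$ concentrates around its mean $N(m/N)^d \approx N t^d$.

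The key steps, in order, would be: (1) fix the target box as above and reduce $|X\cap[0,y)|$ to the permutation statistic $S := \#\{n\in[N] : \pi_j(n)\le m \text{ for all } j\in[d]\}$, handling the $u_{n,j}$ boundary contributions crudely; (2) compute $\E[S]$ exactly — for independent uniform permutations, $\Pr(\pi_j(n)\le m \ \forall j) = (m/N)^d$ by independence across $j$ and uniformity of each $\pi_j(n)$, so $\E[S] = N(m/N)^d$; choose $t$ so that $|N(m/N)^d - N\lambda^d([0,y))|$ is negligible and so that the "natural fluctuation scale" of $S$, which I expect to be of order $\sqrt{N}\cdot(\text{something involving } t^d)$, translates into a discrepancy of order $\sqrt{d/N}$ — this is the same scaling calibration as in the Monte Carlo case, where one picks the box so that the per-point success probability is around $1/2$ or so and the variance is maximized; (3) prove a matching \emph{lower} bound on the fluctuations of $S$, i.e.\ an anti-concentration statement of the form $\Pr(|S - \E S| \ge \alpha\sqrt{N}) \ge \beta$ for absolute constants $\alpha,\beta>0$ — this is the heart of the argument; (4) deduce that with probability $\ge \beta$ we have $D_N(X,[0,y)) \ge (\alpha/2)\sqrt{d/N}$ or so (after absorbing the calibration and boundary errors, which is where the hypotheses $d\ge 2$ and $N\ge 1600d$ get used to make the estimates clean), hence $\E[D^*_N(X)] \ge \E[D_N(X,[0,y))] \ge K\sqrt{d/N}$; (5) for the tail bound, upgrade the anti-concentration: show $\Pr(S \le \E S - c\sqrt{Nd})$ — equivalently the event that $D_N$ is \emph{small} on this one box — has probability $\exp(-\Omega(d))$. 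For the latter I would look for a product/independence structure: since $D^*_N(X)$ small implies $D_N(X,[0,y))$ small for our specific $y$, it suffices to show the latter is an $\exp(-\Omega(d))$ event, e.g.\ by a negative-association or moment argument exploiting that $\Pr(\pi_j(n)\le m\ \forall j) = (m/N)^d$ decays geometrically in $d$, so deviations of $S$ below a constant fraction of its mean are themselves $d$-exponentially rare — this is exactly the kind of estimate that yields $\exp(-\Omega(d))$ rather than merely $\exp(-\Omega(N))$.

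The main obstacle I anticipate is step (3), the anti-concentration lower bound for $S$: unlike the Monte Carlo case where the indicators $\mathbf{1}\{X_n\in[0,y)\}$ are genuinely independent (so $S$ is a sum of i.i.d.\ Bernoullis and classical Berry--Esseen / Paley--Zygmund arguments give the fluctuation lower bound immediately), here the permutation structure introduces negative dependence among the $n$'s (fixing $\pi_j(n)\le m$ makes it slightly less likely that $\pi_j(n')\le m$). Negative dependence is helpful for \emph{upper} tail bounds — indeed this is what powers Theorem~\ref{Theo_Nils_Michi} — but it works against a \emph{lower} bound on the variance/fluctuations. I would handle this by computing $\operatorname{Var}(S)$ exactly (a two-permutation second-moment calculation involving the number of pairs $n\ne n'$ with $\pi_j(n),\pi_j(n')\le m$ for all $j$, which factorizes over $j$ and reduces to the hypergeometric second moment $(m/N)((m-1)/(N-1))$ per coordinate), verifying that the negative correlation only shrinks the variance by a lower-order amount — here $N\gg d$ is essential, since it keeps $(m-1)/(N-1)$ close to $m/N$ — and then running a Paley--Zygmund-type or fourth-moment argument on the centered variable $S - \E S$. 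Alternatively, one can couple: condition on $\pi_j$ restricted to a random subset and use a conditional CLT. Either way, the dimension-exponential tail bound in step (5) should follow from the same second-/higher-moment estimates combined with the geometric-in-$d$ decay of the single-point probability, mirroring the Monte Carlo proof in \cite{Doe13}.
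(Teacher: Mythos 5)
There is a genuine gap, and it is fatal to the whole plan: a \emph{single fixed} anchored box $[0,y)$ cannot produce a local discrepancy of order $\sqrt{d/N}$. For any fixed box $B$, the count $|X\cap B|$ is a sum of $N$ indicator variables, so its standard deviation is at most $\tfrac12\sqrt{N}$ (and negative dependence only shrinks it further); hence $D_N(X,B)=\big||X\cap B|/N-\lambda^d(B)\big|$ is with constant probability of order $1/\sqrt{N}$, never $\sqrt{d}/\sqrt{N}$. Your own calibration in step (2) betrays this: you expect fluctuations of $S$ at scale $\sqrt{N}$, but you need them at scale $\sqrt{Nd}$. The anti-concentration statement $\Pr(|S-\E S|\ge\alpha\sqrt{N})\ge\beta$ in step (3), even if proved, yields only $\E[D_N^*(X)]\ge\Omega(1/\sqrt{N})$. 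Step (5) is worse than a gap --- it is false: since $|S-\E S|=O(\sqrt{N})$ with probability close to $1$ (Chebyshev), the event $\{|S-\E S|<c\sqrt{Nd}\}$ has probability near $1$, not $\exp(-\Omega(d))$, so the smallness of $D_N(X,[0,y))$ on one fixed box can never be an exponentially rare event. You also misremember the Monte Carlo argument of \cite{Doe13}: it does \emph{not} use a single fixed box either; the factor $\sqrt{d}$ there, as here, comes precisely from exploiting the supremum over (exponentially many) test boxes.

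The missing idea is to choose the test box \emph{adaptively as a function of $X$}, coordinate by coordinate, so that each of the $d-1$ coordinates has an independent constant-probability chance of contributing an excess of order $\sqrt{N/d}$, and $\Theta(d)$ successes accumulate to a total excess of order $\sqrt{Nd}$, i.e.\ a discrepancy of order $\sqrt{d/N}$. Concretely, the paper starts from $B_1=[0,\lfloor N/4\rfloor/N)\times[0,1)^{d-1}$ (which has excess exactly $0$ by the Latin hypercube structure) and, for each $j=2,\dots,d$, examines the thin slab of width $c/d$ at the top of coordinate $j$; the number of points of $B_{j-1}$ falling in that slab is exactly hypergeometric, and combining Ehm's hypergeometric-to-binomial comparison (Theorem~\ref{HypGeomBinom}) with the binomial anti-concentration bound of Lemma~\ref{Lemma_Benji} shows that with probability at least $1/80$ the slab is underpopulated by $\tfrac12\sqrt{np}=\Theta(\sqrt{N/d})$, in which case the slab is cut off and the excess grows by that amount. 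The expectation bound then follows from $\E[k]\ge(d-1)/80$ where $k$ counts the successful coordinates, and the $\exp(-\Omega(d))$ tail follows by coupling the (dependent) success indicators to i.i.d.\ Bernoullis (Lemma~\ref{Lemma_Coupling}) and applying the Chernoff--Hoeffding bound (Theorem~\ref{Chernov}). Your instincts about where the permutation structure bites (hypergeometric rather than binomial counts, handled because $n-1\ll N-1$) are correct and do appear in the real proof, but they enter at the per-coordinate slab counts, not at a global second-moment computation for one box.
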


We note that Theorem~\ref{main_theo} does not hold for $d=1$.
Indeed, it is easily verified that in dimension
$d=1$ we have $D^*_N(X) \le 1/N$ almost surely. 

\section{Proof of Theorem~\ref{main_theo}}
\label{PROOF}

We now list the results that we need to prove Theorem~\ref{main_theo}.
We will employ the fact that (under suitable conditions) the hypergeometric distribution resembles 
the binomial distribution. Let us make this statement more precise. 

Consider an urn that contains $N$ balls among which $W$ are white and $N-W$ are black.
Now we draw a random sample of size $n$. The number of white balls in the sample has the \emph{hypergeometric distribution} $H(N, W, n)$ if we sample without replacement and the \emph{binomial distribution} $B(n,p)$ with
$$p:=W/N$$ 
if we sample with replacement.
The deviation of both distributions can be measured by the \emph{total variation distance}
\begin{equation*}
\delta \big( H(N, W, n), B(n,p) \big) := \max_{A\subseteq \{0,1,\ldots,n\}} |H(N, W, n)(A) - B(n,p)(A)|.
\end{equation*}
The following theorem can be found in \cite[p.~1]{Kue98}; here we only need the upper bound, which is due to Ehm, see \cite{Ehm91}. 

\begin{theorem}\label{HypGeomBinom}
Let $n,N,W \in \N$ with $W,n\le N$ and let $p \in (0,1)$ such that $np(1-p) \ge 1$.
Then 
\begin{equation*}
\frac{1}{28} \frac{n-1}{N-1} \le \delta \big( H(N, W, n),B(n,p) \big) \le \frac{n-1}{N-1}.
\end{equation*}
\end{theorem}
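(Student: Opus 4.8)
The plan is to prove the two inequalities separately by Stein's method, carrying out the (elementary, and for this note sufficient) upper bound in full and reducing the lower bound to an explicit normal-approximation estimate. Throughout I write $X\sim H(N,W,n)$, set $p=W/N$, and use the forward difference $\Delta g(k):=g(k+1)-g(k)$.

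For the upper bound I would start from the Stein characterization of $B(n,p)$: a random variable $Y$ on $\{0,\dots,n\}$ is $B(n,p)$-distributed if and only if $\E[(n-Y)p\,g(Y+1)-Y(1-p)\,g(Y)]=0$ for all bounded $g$. For a fixed set $A\subseteq\{0,\dots,n\}$ one solves the Stein equation $(n-k)p\,g(k+1)-k(1-p)\,g(k)=\mathbf{1}_A(k)-B(n,p)(A)$, so that $H(N,W,n)(A)-B(n,p)(A)=\E[(n-X)p\,g(X+1)-X(1-p)\,g(X)]$. The crucial algebraic step is to simplify the right-hand side using the intrinsic Stein identity of the hypergeometric law. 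Comparing consecutive probabilities $H(N,W,n)(k+1)/H(N,W,n)(k)$ yields the exact relation $\E[(W-X)(n-X)\,g(X+1)]=\E[X(N-W-n+X)\,g(X)]$ for every bounded $g$; substituting this into the previous display and collecting the $g(X)$ and $g(X+1)$ terms collapses everything to the remarkably clean identity
\begin{equation*}
H(N,W,n)(A)-B(n,p)(A)=\frac1N\,\E\!\big[X(n-X)\,\Delta g(X)\big].
\end{equation*}
From here I would invoke the standard binomial Stein factor $\|\Delta g\|_\infty\le 1/\big(np(1-p)\big)$ (valid under $np(1-p)\ge 1$), together with the elementary moment computation $\E[X(n-X)]=n(n-1)\,N\,p(1-p)/(N-1)$, which follows from $\E[X]=np$ and $\E[X(X-1)]=n(n-1)W(W-1)/(N(N-1))$. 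Multiplying these gives $|H(N,W,n)(A)-B(n,p)(A)|\le (n-1)/(N-1)$ uniformly in $A$, which is exactly the claimed upper bound.

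For the lower bound I would exploit the same structure in reverse. The variance gap between the two laws is exact and of the right order, $\mathrm{Var}\big(B(n,p)\big)-\mathrm{Var}\big(H(N,W,n)\big)=np(1-p)\,(n-1)/(N-1)$. Under the hypothesis $np(1-p)\ge 1$ both distributions are spread out enough that a local central limit theorem with explicit error terms approximates each by the normal law with common mean $np$ and its own variance. Since the total variation distance between two normals with a shared mean and variance ratio $1-(n-1)/(N-1)$ is itself of order $(n-1)/(N-1)$, one obtains the matching lower bound after subtracting the (smaller-order) local-CLT errors; making every constant explicit produces the factor $1/28$. Alternatively, one can take $A$ to be a half-line cut near the median and lower-bound $\Delta g_A$ close to the mean directly from the Stein equation, thereby avoiding the normal comparison.

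I expect the lower bound to be the main obstacle. The upper bound is essentially forced once the hypergeometric Stein identity is in hand, but the lower bound requires quantitative control that a genuine total-variation gap of size $\Theta((n-1)/(N-1))$ survives; this is precisely where the regularity assumption $np(1-p)\ge 1$ enters, guaranteeing that the mass is not concentrated on a handful of atoms (where both the normal comparison and the Stein-factor lower bound would degrade). Since this note only uses the upper bound, I would present the Stein-identity argument in detail and cite \cite{Ehm91,Kue98} for the explicit constant in the lower bound.
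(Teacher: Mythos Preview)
The paper does not prove Theorem~\ref{HypGeomBinom} at all: it is quoted from K\"unsch~\cite{Kue98}, with the upper bound attributed to Ehm~\cite{Ehm91}, and then only the upper bound is used (in the derivation of~\eqref{prob_est}). So there is no ``paper's own proof'' to compare your proposal against.

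That said, your upper-bound argument is correct and is essentially Ehm's original Stein-method computation. The key identity $H(N,W,n)(A)-B(n,p)(A)=\tfrac1N\,\E[X(n-X)\,\Delta g(X)]$ does follow from combining the binomial Stein equation with the hypergeometric detailed-balance relation, and the moment $\E[X(n-X)]=Np(1-p)\,n(n-1)/(N-1)$ together with the Stein factor $\|\Delta g\|_\infty\le 1/(np(1-p))$ yields $(n-1)/(N-1)$ on the nose. One small remark: the Stein-factor bound actually holds without the hypothesis $np(1-p)\ge1$ (Ehm's bound is $\|\Delta g\|_\infty\le(1-p^{n+1}-(1-p)^{n+1})/((n+1)p(1-p))$, which is always at most $1/(np(1-p))$), so the upper inequality in the theorem is unconditional; the hypothesis is needed only for the lower bound.

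Your lower-bound sketch is plausible in outline but, as you yourself concede, does not deliver the explicit constant $1/28$ without substantial further work (tracking all error terms in a local CLT, or carrying out the Stein lower bound on $\Delta g_A$ near the mean). Since the present paper neither proves nor uses the lower bound, simply citing \cite{Ehm91,Kue98} for it---as you propose at the end---is exactly what the paper does.
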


Furthermore, we will make use of the following lemma from \cite{Doe13}.

\begin{lemma}\label{Lemma_Benji}
Let $n\ge 16$ and $1/n \le p \le 1/4$. Then
\begin{equation*}
B(n,p)\! \left( \left[0, np - \frac{1}{2} \sqrt{np} \right] \right) \ge \frac{3}{160}.
\end{equation*}  
\end{lemma}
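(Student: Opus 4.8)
The statement is an \emph{anti-concentration} lower bound for the lower tail of a binomial law: writing $S\sim B(n,p)$, it asserts that $S$ lies at or below $np-\frac{1}{2}\sqrt{np}$ — a point only about half a standard deviation below the mean $np$ — with probability at least the absolute constant $3/160$. The Gaussian heuristic already suggests this probability is roughly $\Phi\big(-\frac{1}{2}/\sqrt{1-p}\big)\in[\Phi(-0.578),\Phi(-0.5)]\approx[0.28,0.31]$, so there is a large amount of slack against $3/160\approx0.0188$; the entire difficulty is to make a \emph{uniform}, fully rigorous argument valid for every admissible pair $(n,p)$ and robust against the discreteness of $S$. Note how the hypotheses cooperate: when $p$ is close to its upper bound $1/4$, the constraint $n\ge16$ forces $np\ge4$, whereas $np$ can be small only if $p$ is close to $1/n$, in which case $B(n,p)$ is Poisson-like. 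The plan is therefore to split into a ``small $np$'' regime, handled by explicit atoms, and a ``moderate/large $np$'' regime, handled by a moment argument.

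For the moderate/large regime I would argue by a first-moment/second-moment comparison. Put $Y:=S-np$, $t:=\frac{1}{2}\sqrt{np}$, $\sigma:=\sqrt{np(1-p)}$, and $M:=\max_k\Pr(S=k)$. Since $\E Y=0$ we have $\E Y^{+}=\E Y^{-}$, hence $\E Y^{-}=\frac{1}{2}\,\E|Y|$. Splitting the negative part at the threshold,
\[
\tfrac{1}{2}\,\E|Y| \;=\; \E\big[(-Y)\,\mathbf{1}_{-t\le Y<0}\big] + \E\big[(-Y)\,\mathbf{1}_{Y<-t}\big],
\]
I bound the first summand by $t\,\Pr(np-t\le S<np)\le t\,(t+1)\,M$, using an explicit upper bound on the maximal atom, and the second by Cauchy–Schwarz, $\E[(-Y)\,\mathbf{1}_{Y<-t}]\le\sigma\sqrt{\Pr(Y<-t)}$. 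Rearranging yields the key inequality
\[
\Pr\!\left(S\le np-\tfrac{1}{2}\sqrt{np}\right)\;\ge\;\left(\frac{\tfrac{1}{2}\,\E|S-np|-t(t+1)M}{\sigma}\right)^{2},
\]
valid whenever the numerator is positive. Feeding in two standard estimates — a lower bound on the mean absolute deviation $\E|S-np|\ge c_1\sqrt{np}$ (e.g.\ via the de Moivre identity) and an upper bound on the maximal atom $M\le c_2/\sqrt{np(1-p)}$ — reduces the claim to an elementary numerical inequality in the single variable $np$.

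For the small-$np$ regime the moment inequality degenerates (the numerator can turn negative, as one checks near $np=1$), but there the tail is dominated by its leading atoms: since $np-t\ge0$, the event contains $\{S=0\}$, and $\Pr(S=0)=(1-p)^{n}\ge e^{-2np}$ is already a constant that comfortably exceeds $3/160$ for $np$ small; adding $\Pr(S=1),\Pr(S=2),\dots$ as needed extends this up to the threshold where the moment argument takes over.

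The main obstacle is not any single inequality but the \emph{bookkeeping of the constant}. One must (i) fix explicit admissible values of $c_1$ and $c_2$ so that the squared expression above actually clears $3/160$ at its worst point, which a quick check places around $np\approx4$, $p\approx\frac{1}{4}$, where the bound is genuinely tight; and (ii) choose the crossover between the two regimes so that every pair $(n,p)$ with $n\ge16$ and $1/n\le p\le1/4$ falls into exactly one of them. Matching the explicit-atom estimate and the moment estimate at a common threshold, rather than proving either one in isolation, is where the real care is required.
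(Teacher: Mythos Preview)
The paper does not prove this lemma: it is quoted verbatim from \cite{Doe13} and used as a black box in the proof of Theorem~\ref{main_theo}. There is therefore no argument in the present paper to compare your sketch against; any comparison would have to be with the original source.

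As to your outline on its own merits: the two-regime split and the Cauchy--Schwarz/moment inequality are a reasonable route, and the displayed bound
\[
\Pr\!\left(S\le np-\tfrac12\sqrt{np}\right)\;\ge\;\left(\frac{\tfrac12\,\E|S-np|-t(t{+}1)M}{\sigma}\right)^{2}
\]
is correctly derived. What is missing is exactly the part you flag yourself: no explicit values of $c_1,c_2$ are supplied, no crossover value of $np$ is fixed, and no check is made that the two regimes together cover every admissible $(n,p)$ with margin at least $3/160$. One small warning on the prose: the claim that $(1-p)^n\ge e^{-2np}$ ``comfortably exceeds $3/160$ for $np$ small'' already fails near $np\approx 2$, since $e^{-4}\approx 0.0183<0.01875=3/160$; so the extra atoms $\Pr(S=1),\Pr(S=2),\ldots$ are genuinely needed there, and the matching of the two regimes is tighter than your wording suggests. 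The strategy is sound, but as written it is a plan rather than a proof.
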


Finally, we need the following Chernoff-Hoeffding bound 
for sums of independent Bernoulli random variables, 
see \cite{Hoe63}.
Recall that a Bernoulli random variable is simply a random variable that takes only values in $\{0,1\}$.
 
 \begin{theorem}\label{Chernov}
Let $k\in\N$, and  let $\xi_1, \ldots , \xi_k$ be independent (not necessarily identically distributed) Bernoulli random variables. 
Put $S:= \sum^k_{i=1} (\xi_i-{\rm E}[\xi_i])$.
Then we have  for all $t>0$ that
\begin{eqnarray}\label{Hoeffding_both}
{\rm P}\left( S < -tk \right)  \leq \exp\left(- 2t^2k\right).
\end{eqnarray}
\end{theorem}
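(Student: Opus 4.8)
The plan is to prove the bound by the classical exponential-moment (Chernoff) method, reducing the tail estimate to a control on the moment generating function of each summand. Writing $p_i := \E[\xi_i]$ and $Y_i := \xi_i - p_i$, each $Y_i$ is centered, $\E[Y_i]=0$, and takes values in the interval $[-p_i,\,1-p_i]$, whose length is exactly $1$. Hence $S=\sum_{i=1}^k Y_i$ is a sum of independent, centered, bounded random variables, and the target reduces to a one-sided Hoeffding inequality for such sums.

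First I would fix an arbitrary parameter $s>0$ and observe that $\{S<-tk\}$ is the same event as $\{e^{-sS}>e^{stk}\}$; applying Markov's inequality to the nonnegative random variable $e^{-sS}$ yields
\begin{equation*}
\Pr(S<-tk) \le e^{-stk}\,\E\big[e^{-sS}\big].
\end{equation*}
By the independence of the $\xi_i$ (hence of the $Y_i$), the moment generating function factorizes,
\begin{equation*}
\E\big[e^{-sS}\big] = \prod_{i=1}^k \E\big[e^{-sY_i}\big],
\end{equation*}
so it suffices to bound each factor separately and then optimize over $s$.

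The heart of the argument is Hoeffding's lemma: for a centered random variable $Y$ supported in an interval $[a,b]$ one has $\E[e^{\sigma Y}]\le \exp(\sigma^2(b-a)^2/8)$ for every real $\sigma$. Applied with $\sigma=-s$ and $b-a=1$, this gives $\E[e^{-sY_i}]\le \exp(s^2/8)$, and therefore
\begin{equation*}
\Pr(S<-tk) \le \exp\!\left(-stk+\frac{ks^2}{8}\right).
\end{equation*}
Finally I would tune the free parameter: the exponent $-stk+ks^2/8$ is minimized at $s=4t>0$, where it equals $-4t^2k+2t^2k=-2t^2k$, giving the asserted bound $\Pr(S<-tk)\le \exp(-2t^2k)$ for all $t>0$.

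The main obstacle is establishing Hoeffding's lemma itself. The cleanest route is to set $\psi(\sigma):=\log\E[e^{\sigma Y}]$ and observe that $\psi(0)=0$ and $\psi'(0)=\E[Y]=0$, while $\psi''(\sigma)$ equals the variance of $Y$ under the exponentially tilted probability measure with density proportional to $e^{\sigma Y}$. Since that tilted law is again supported in $[a,b]$, its variance is at most $(b-a)^2/4$, the maximal variance of a random variable confined to an interval of length $b-a$ (attained by the two-point distribution on the endpoints). A second-order Taylor expansion of $\psi$ about $0$ then gives $\psi(\sigma)\le \sigma^2(b-a)^2/8$, which is exactly the lemma. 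With the interval length equal to $1$ in our setting, everything else is the routine Chernoff bookkeeping carried out above, and nothing requires the $\xi_i$ to be identically distributed.
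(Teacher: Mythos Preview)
Your argument is correct; it is the standard Chernoff--Hoeffding proof via the exponential moment method and Hoeffding's lemma, and every step (Markov's inequality, factorization by independence, the bound $\E[e^{-sY_i}]\le e^{s^2/8}$ from the unit-length support, and the optimization at $s=4t$) goes through exactly as you wrote. Note, however, that the paper does not actually prove this theorem: it is quoted as a known result with a reference to Hoeffding's original 1963 paper, so there is no ``paper's own proof'' to compare against. Your write-up is precisely the classical justification one would give for the cited inequality.
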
 

The Bernoulli random variables $\eta_i$, $i=1,\ldots, d$, that appear in our proof of Theorem \ref{main_theo}
are actually not independent; to cope with that we need the following lemma.

\begin{lemma}\label{Lemma_Coupling}
Let $k\in \N$ and $q\in (0,1)$. Let  $\xi_1,\ldots, \xi_k$ be independent Bernoulli random variables with ${\rm P}(\xi_j=1) = q$ for all $j\in [k]$, and let $\eta_1,\ldots, \eta_k$ be (not necessarily independent) Bernoulli random variables satisfying 
\begin{equation*}
{\rm P}(\eta_j =1 \,|\, \eta_1=v_1, \ldots,\eta_{j-1} = v_{j-1} ) \ge q
\hspace{3ex}\text{for all $j\in [k]$ and all $v\in \{0,1\}^{j-1}$.}
\end{equation*}
Then we have
\begin{equation}\label{coupling_statement}
{\rm P}\left( \sum_{i=1}^j \eta_i <t \right) \le {\rm P}\left( \sum_{i=1}^j \xi_i <t \right)
\hspace{3ex}\text{for all $j\in [k]$ and all $t>0$.}
\end{equation}
\end{lemma}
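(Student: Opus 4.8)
The plan is to prove Lemma~\ref{Lemma_Coupling} by constructing an explicit coupling of the $\eta_i$ and $\xi_i$ on a common probability space such that, with probability one, every partial sum of the $\eta$'s dominates the corresponding partial sum of the $\xi$'s. Once such a coupling is in place, the inequality \eqref{coupling_statement} is immediate: on the coupling space the event $\{\sum_{i=1}^j \xi_i < t\}$ contains the event $\{\sum_{i=1}^j \eta_i < t\}$, so the probability of the former is at least the probability of the latter, and probabilities are invariant under coupling.

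First I would build the coupling sequentially. Work with $k$ independent uniform $[0,1]$ random variables $U_1,\ldots,U_k$. Define $\xi_j := \mathbf{1}[U_j \le q]$, so the $\xi_j$ are i.i.d.\ Bernoulli$(q)$ as required. To define the $\eta_j$, proceed inductively: having already produced $\eta_1 = v_1, \ldots, \eta_{j-1} = v_{j-1}$, let $q_j := \Pr(\eta_j = 1 \mid \eta_1 = v_1,\ldots,\eta_{j-1}=v_{j-1})$, which by hypothesis satisfies $q_j \ge q$, and set $\eta_j := \mathbf{1}[U_j \le q_j]$. A routine check shows that the $\eta_j$ constructed this way have exactly the prescribed conditional distribution, hence the correct joint law. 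The crucial point is the monotonicity $q_j \ge q$: since $U_j \le q$ implies $U_j \le q_j$, we get $\xi_j \le \eta_j$ pointwise for every $j$, and therefore $\sum_{i=1}^j \xi_i \le \sum_{i=1}^j \eta_i$ surely, for all $j \in [k]$.

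Concluding from the coupling: fix $j \in [k]$ and $t>0$. On the coupling space $\{\sum_{i=1}^j \eta_i < t\} \subseteq \{\sum_{i=1}^j \xi_i < t\}$ because $\sum \xi_i \le \sum \eta_i$. Taking probabilities gives $\Pr(\sum_{i=1}^j \eta_i < t) \le \Pr(\sum_{i=1}^j \xi_i < t)$, and since the marginal laws of $(\eta_i)$ and $(\xi_i)$ on the coupling space agree with those in the statement, this is exactly \eqref{coupling_statement}.

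The only delicate point — more a matter of care than of difficulty — is verifying that the inductively defined $\eta_j$ really do realize the given conditional probabilities; one must make sure the conditioning is well defined (i.e.\ restrict attention to histories $v \in \{0,1\}^{j-1}$ that occur with positive probability, assigning the values on null histories arbitrarily) and that $U_j$ being independent of $U_1,\ldots,U_{j-1}$ makes $\eta_j$ conditionally Bernoulli$(q_j)$ given the past. Apart from that bookkeeping, the argument is a standard stochastic-domination coupling and requires no real computation. I expect the main (mild) obstacle to be phrasing the measurability/null-set details cleanly rather than any substantive mathematical difficulty.
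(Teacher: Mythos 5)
Your proposal is correct, but it takes a genuinely different route from the paper. The paper proves the lemma by induction on $j$ with a purely algebraic decomposition: it splits $\Pr\bigl(\sum_{i=1}^{j+1}\eta_i<t\bigr)$ into $\Pr\bigl(\sum_{i=1}^{j}\eta_i<t-1\bigr)$ plus the probability that $\eta_{j+1}=0$ while the partial sum lies in $[t-1,t)$, bounds the latter by $(1-q)\Pr\bigl(\sum_{i=1}^{j}\eta_i\in[t-1,t)\bigr)$ using the conditional hypothesis, rewrites the result as the convex combination $q\,\Pr\bigl(\sum\eta_i<t-1\bigr)+(1-q)\Pr\bigl(\sum\eta_i<t\bigr)$, applies the induction hypothesis to both terms, and then runs the same identities backwards for the independent $\xi_i$. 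You instead build an explicit monotone coupling: realize both sequences from common uniforms $U_1,\ldots,U_k$ with $\xi_j=\mathbf{1}[U_j\le q]$ and $\eta_j=\mathbf{1}[U_j\le q_j]$, where $q_j\ge q$ is the conditional success probability given the realized history, so that $\xi_j\le\eta_j$ pointwise and the event inclusion $\{\sum\eta_i<t\}\subseteq\{\sum\xi_i<t\}$ is immediate. Your argument is sound (the sequential construction does reproduce the prescribed joint law, since $U_j$ is independent of the history, and the null-history caveat is handled correctly as long as you set $q_j\ge q$ there too so that domination holds almost surely); it in fact yields the stronger conclusion that all partial sums are simultaneously dominated almost surely on the coupling space, i.e.\ full stochastic domination of the vector of partial sums. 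What the paper's induction buys in exchange is a completely self-contained, calculation-only proof that never leaves the original probability space and needs no discussion of couplings or measurability of the conditional law.
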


Since we do not know a proper reference for this lemma, we provide a proof. For a finite bit string $v\in\{0,1\}^j$ we put $|v|_1 := v_1 + \cdots + v_j$. 

\begin{proof}
We verify statement \eqref{coupling_statement} by induction on $j$. For $j=1$ statement \eqref{coupling_statement}
is true, since for $t\in (0,1]$ we have ${\rm P}(\eta_1 <t) \le 1-q = {\rm P}(\xi_1 <t)$ and for the trivial case $t>1$ we have
${\rm P}(\eta_1 <t)  =1 = {\rm P}(\xi_1 <t)$.

Now assume that statement \eqref{coupling_statement} is true for  $j\in [k-1]$. This gives for $t>0$
\begin{equation*}
\begin{split}
&{\rm P}\left( \sum_{i=1}^{j+1} \eta_i <t \right) = {\rm P}\left( \sum_{i=1}^{j} \eta_i < t-1 \right) + {\rm P}\left( \eta_{j+1} =0\,,\, \sum_{i=1}^{j} \eta_i \in [t-1, t) \right)\\
= \,&{\rm P}\left( \sum_{i=1}^{j} \eta_i < t-1 \right) + \sum_{\substack{v\in \{0,1\}^{j}\\ |v|_1 \in [t-1,t)}} {\rm P}\left( \eta_{j+1} =0\,|\, \eta_1 =v_1, \ldots,  \eta_j=v_j \right)\times \\
&\times {\rm P}\left( \eta_1 =v_1, \ldots,  \eta_j=v_j \right) \\
\le \,&{\rm P}\left( \sum_{i=1}^{j} \eta_i < t-1 \right) + (1-q) {\rm P}\left( \sum_{i=1}^{j} \eta_i \in [t-1, t) \right)\\
=  \,&q\,{\rm P}\left( \sum_{i=1}^{j} \eta_i < t-1 \right) + (1-q)\,{\rm P}\left( \sum_{i=1}^{j} \eta_i <t \right)\\
\le \,&q\,{\rm P}\left( \sum_{i=1}^{j} \xi_i < t-1 \right) + (1-q)\,{\rm P}\left( \sum_{i=1}^{j} \xi_i <t \right)\\
= \,&{\rm P}\left( \sum_{i=1}^{j} \xi_i < t-1 \right) + (1-q) {\rm P}\left( \sum_{i=1}^{j} \xi_i \in [t-1, t) \right)\\
= \,&{\rm P}\left( \sum_{i=1}^{j} \xi_i < t-1 \right) + {\rm P}\left( \xi_{j+1} =0\,,\, \sum_{i=1}^{j} \xi_i \in [t-1, t) \right)\\
= \,&{\rm P}\left( \sum_{i=1}^{j+1} \xi_i <t \right).
\end{split}
\end{equation*}
\end{proof}

For a given $N$-point set $P\subset [0,1)^d$ and a measurable set $B\subseteq [0,1)^d$ let us define the 
\emph{excess of points from $P$ in $B$} by 
\begin{equation*}
{\rm exc}(P,B) := |P \cap B| - N\lambda^d(B).
\end{equation*} 
For an arbitrary anchored test box $B$ we always have 
\begin{equation}\label{karo}
D^*_N(P) \ge D_N(P,B) \ge \frac{1}{N} {\rm exc}(P,B).
\end{equation}

\begin{proof}[Proof of Theorem \ref{main_theo}]
We adapt the proof approach of \cite[Theorem~1]{Doe13}  and construct recursively a random test box $B_d = B_d(X)$ that exhibits with high probability a (relatively) large excess of points ${\rm exc}(X,B_d)$. Due to \eqref{karo} this leads to a (relatively) large local discrepancy $D_N(X,B_d)$. Put $I:= [0, \lfloor N/4 \rfloor /N)$,
where $\lfloor N/4 \rfloor := \max\{z\in \Z \,|\, z \le N/4\}$. 
We start with $B_1 := I \times [0,1)^{d-1}$. Notice that there are exactly $\lfloor N/4 \rfloor$ points of $X$ inside the box $B_1$, implying ${\rm exc}(X, B_1) = 0$. 
The recursion step is as follows: Let $j\ge 2$ and assume we already have a test box $B_{j-1}$ that satisfies ${\rm exc}(X, B_{j-1}) \ge 0$ and is of the form 
\begin{equation*}
B_{j-1} := I \times \prod^{j-1}_{i=2}[0,x_i) \times [0,1)^{d-j+1},
\end{equation*}
where $x_i \in \{1- c/d, 1\}$ for $i=2,\ldots, j-1$ and $c$ is the largest value in $(1/84, 1/80]$ that ensures $Nc/d\in \N$. 
Observe that due to $N\ge 1600\,d$ we have $Nc/d\ge 20$ and $\lambda^d(B_1)= \lambda^1(I) \in (1/5, 1/4]$. 
Let 
\begin{equation*}
S_j:= [0,1)^{j-1} \times [1-c/d, 1) \times [0,1)^{d-j}
\hspace{3ex}\text{and}\hspace{3ex}
C_j:= 
B_{j-1}\cap S_j,
\end{equation*}
and put 
$$Y_j:= |X \cap C_j|.$$ Looking at Definition~\ref{Def_LHS} one sees easily that $Y_j$ has the hypergeometric distribution $H(N, W, n)$ with
\begin{equation*}
W := |X \cap B_{j-1}| \hspace{3ex}\text{and}\hspace{3ex}  n := |X\cap S_j| = N \frac{c}{d}.
\end{equation*}
Observe that 
\begin{equation}\label{stern}
\frac{1}{4} \ge \lambda^d(B_{j-1}) \ge \frac{1}{5}(1-c/d)^{d-2} \ge \frac{1}{5}(1-c/d)^d \ge \frac{1}{5}(1-c/2)^2 =: v \ge \frac{1}{6},
\end{equation}
and, due to ${\rm exc}(X, B_{j-1}) \ge 0$,
\begin{equation}\label{sternstern}
W = |X\cap B_{j-1}| \ge N \lambda^d(B_{j-1}) \ge Nv.
\end{equation}
Put 
$$p:= W/N.$$ 
We now want to check that the conditions on $p$ and $n$ in Theorem \ref{HypGeomBinom} and Lemma \ref{Lemma_Benji} hold.
Due to $B_{j-1}\subseteq B_{1}$ and ${\rm exc}(X,B_1) =0$ we have $p\le 1/4$.  Furthermore, we have $n= Nc/d \ge 20$ and, due to \eqref{sternstern} and \eqref{stern}, $p\ge v\ge 1/6 \ge 1/n$.
This leads to 
\begin{equation*}
np(1-p) \ge 20 \cdot \frac{1}{6} \left( 1 -\frac{1}{4} \right) = \frac{5}{2} > 1. 
\end{equation*}
Hence we may apply 
Theorem \ref{HypGeomBinom} and Lemma \ref{Lemma_Benji} to obtain
\begin{equation}\label{prob_est}
\begin{split}
{\bf {\rm P}} \! \left( Y_j \le np - \frac{1}{2}\sqrt{np} \right) &\ge B(n,p)\!\left( \left[ 0, np - \frac{1}{2}\sqrt{np} \right] \right) 
- \delta \big( H(N, W, n),  B(n,p) \big)\\
&\ge \frac{3}{160} - \frac{c}{d}\\ 
&\ge \frac{1}{80}. 
\end{split}
\end{equation}
If 
\begin{equation*}
Y_j = | X\cap C_j | \le np - \frac{1}{2} \sqrt{np},
\end{equation*}
then put $x_j:= 1-c/d$, and otherwise put $x_j:=1$.
We define 
\begin{equation*}
B_{j} := I \times \prod^{j}_{i=2}[0,x_i) \times [0,1)^{d-j}.
\end{equation*}
Before we go on, let us make a helpful observation: Put 
$$\eta_i: = 1_{[x_i = 1-c/d]}(X) \hspace{3ex} \text{for $i= 2,\ldots,j$.}$$ 
Then $\eta_i$
is a Bernoulli random variable and \eqref{prob_est} says that ${\rm P}(\eta_j =1) \ge 1/80$. Actually, due to our construction we proved a slightly stronger result, namely:
\begin{equation}\label{prob_est+}
{\rm P}\left( \eta_j =1 \,|\, \eta_2=v_1, \ldots, \eta_{j-1}=v_{j-2}\right) \ge 1/80 
\hspace{2ex}\text{for all $v
\in \{0,1\}^{j-2}$}
\end{equation}
(since \eqref{prob_est} holds for all values of $\eta_2,\ldots,\eta_{j-1}$ that have been determined previously in the course of the construction of $B_j$).

We now want to estimate the excess of points of $X$ in $B_j$. In the case $x_j= 1-c/d$ we have
$\lambda^d(B_{j}) = (1-c/d) \lambda^d(B_{j-1})$ and thus
\begin{equation*}
\begin{split}
{\rm exc}(X,B_j) &= |X\cap B_{j-1}| - |X \cap C_{j}| - N (1-c/d) \lambda^d(B_{j-1})\\
&\ge |X\cap B_{j-1}| - np + \frac{1}{2} \sqrt{np} - N (1-c/d) \lambda^d(B_{j-1})\\
&= (1-c/d) \left( |X\cap B_{j-1}| - N \lambda^d(B_{j-1}) \right) + \frac{1}{2} \sqrt{np}\\
&= (1-c/d) {\rm exc}(X, B_{j-1}) +  \frac{1}{2} \sqrt{W\frac{c}{d}}\\
&\ge (1-c/d) {\rm exc}(X, B_{j-1}) +  \frac{\sqrt{cv}}{2}\sqrt{\frac{N}{d}},
\end{split}
\end{equation*}
where in the last step we used \eqref{sternstern}.

In the case $x_j = 1$ we obviously have $B_j = B_{j-1}$ and consequently ${\rm exc}(X,B_j) = {\rm exc}(X, B_{j-1})$. 

Put
\begin{equation*}
k= k(X) := |\{ i \in \{2,\ldots,d\} \,|\, x_i = 1-c/d\}|.
\end{equation*}
Due to $(1-c/d)^k \ge 5v\,$ (cf. \eqref{stern}) we obtain
\begin{equation}\label{exc_est}
{\rm exc}(X,B_d) \ge  k(1-c/d)^k \frac{\sqrt{cv}}{2} \sqrt{N/d} \ge \frac{5}{2}\sqrt{cv^3}\, k \sqrt{N/d}.
\end{equation}
Thus we get on the one hand from \eqref{karo}
\begin{equation*}
\begin{split}
{\bf {\rm E}}[D^*_N(X)] &\ge \frac{1}{N}{\bf {\rm E}}[{\rm exc}(X,B_d)]\\
&\ge \sum^{d-1}_{\kappa=0}\frac{5}{2}\sqrt{cv^3}\, \kappa \sqrt{1/Nd}\, {\bf {\rm P}}(k(X) = \kappa)\\
&= \frac{5}{2}\sqrt{cv^3} \,\sqrt{1/Nd}\, \sum^{d-1}_{\kappa=0} \kappa \,{\bf {\rm P}}(k(X) = \kappa)\\
&= \frac{5}{2}\sqrt{cv^3}\, \sqrt{1/Nd} \,{\bf {\rm E}}[k(X)]\\
&\ge (\sqrt{cv^3}/32\sqrt{2}) \sqrt{(d-1)/N},
\end{split}
\end{equation*}
where in the last step we used \eqref{prob_est} to obtain 
\begin{equation*}
{\bf {\rm E}}[k(X)] = \sum^d_{i=2} {\bf {\rm E}}[\eta_i] 
=  \sum^d_{i=2} {\bf {\rm P}} \! \left( Y_i \le np - \frac{1}{2}\sqrt{np} \right)
\ge (d-1)/80.
\end{equation*}
On the other hand we get from \eqref{exc_est} for $K:= \sqrt{cv^3}/80$ 
\begin{equation*}
\begin{split}
{\bf {\rm P}} \left( D^*_N(X) < K \sqrt{d/N} \right) &\le {\bf {\rm P}} \left( {\rm exc}(X,B_d) < K \sqrt{dN} \right)\\
&\le {\bf {\rm P}} \left(  \frac{5}{2}\sqrt{cv^3}\,k(X) \sqrt{N/d} < K \sqrt{dN} \right)\\ 
&= {\bf {\rm P}} \left( k(X) < d/200 \right)\\
&= {\bf {\rm P}} \left( \sum^d_{i=2} \eta_i < d/200 \right).
\end{split}
\end{equation*}
Let $\xi_i$, $i=2,\ldots,d$, be independent Bernoulli random variables with
\begin{equation*}
{\bf {\rm P}}( \xi_i = 1) = 1/80 
\hspace{3ex}\text{and}\hspace{3ex}
{\bf {\rm P}}( \xi_i = 0) = 79/80.
\end{equation*}
Clearly, ${\bf {\rm E}}[\xi_i] = 1/80$.
Since estimate \eqref{prob_est+} holds for each $j\in \{2,\ldots,d\}$, 
we have due to Lemma \ref{Lemma_Coupling}
\begin{equation*}
{\bf {\rm P}} \left( \sum^d_{i=2} \eta_i < d/200 \right) 
\le {\bf {\rm P}} \left( \sum^d_{i=2} \xi_i  < d/200 \right).
\end{equation*}
Hence we get  from Theorem~\ref{Chernov}
\begin{equation*}
\begin{split}
{\bf {\rm P}} \left( D^*_N(X) < K \sqrt{d/N} \right) 
&\le {\bf {\rm P}} \left( \sum^d_{i=2} (\xi_i - {\bf {\rm E}}[\xi_i])  < \left( \frac{1}{200} - \frac{1}{80} \frac{d-1}{d} \right) d \right)\\
&\le {\bf {\rm P}} \left( \sum^d_{i=2} (\xi_i - {\bf {\bf {\rm E}}}[\xi_i])  < -\frac{d}{800} \right)\\
&\le \exp \left( - \frac{2d^2}{(800)^2 (d-1)} \right)\\
&= \exp \left( - \Omega (d) \right).
\end{split}
\end{equation*}
This concludes the proof of the theorem.
\end{proof}

\subsection*{Acknowledgment}
The authors thank two anonymous referees for their comments 
which helped to improve the presentation of the paper.

Part of this work was done while Michael Gnewuch was visiting the Laboratoire d'Informatique (LIX)
as  Chercheur Invit\'e of \'Ecole Polytechnique.
He likes to thank the colleagues from LIX for their hospitality.


\begin{thebibliography}{99}

\footnotesize

\bibitem{Ais11}
    Aistleitner, C.: 
    Covering numbers, dyadic chaining and discrepancy. 
    \newblock J.~Complexity \textbf{27}, 531--540  (2011)

\bibitem{DKS13}
   Dick, J., Kuo, F.Y., Sloan, I.H.:
   High-dimensional integration: the quasi-{M}onte {C}arlo way.
   \newblock Acta Numerica \textbf{22}, 133--288 (2013)

\bibitem{Doe13}
   Doerr, B.: 
   A lower bound for the discrepancy of a random point set. 
   \newblock J.~Complexity \textbf{30}, 16--20 (2014)
  
\bibitem{Ehm91}
    Ehm, W.: 
    Binomial approximation to the {P}oisson binomial distribution.
    \newblock Statist. Probab. Lett. \textbf{11}, 7--16 (1991)

\bibitem{GH16}
    Gnewuch, M., Hebbinghaus, N.: 
    Discrepancy bounds for a class of negatively dependent
    random points including {L}atin hypercube samples.
    \newblock Preprint (2016) 

\bibitem{Hei03}
   Heinrich, S.: 
   Some open problems concerning the star-discrepancy.
   \newblock J.~Complexity \textbf{19}, Oberwolfach Special Issue, 416--419 (2003)

\bibitem{HNWW01}
   Heinrich, S., Novak, E., Wasilkowski, G.W., Wo\'zniakowski, H.:
   The inverse of the star-discrepancy depends linearly on the dimension. 
   \newblock Acta Arith. \textbf{96},  279--302 (2001)

\bibitem{HSW04}
   Hickernell, F. J., Sloan, I.H.,  Wasilkowski, G.W.: 
   On tractability of weighted integration over bounded and unbounded regions in $\R^s$. 
   \newblock Math. Comput. \textbf{73}, 1885--1902 (2004)

\bibitem{Hin04}
   Hinrichs, A.: 
   Covering numbers, {V}apnik-\v{C}ervonenkis classes and bounds for the star-discrepancy. 
   \newblock J.~Complexity \textbf{20}, 477--483 (2004)
   
\bibitem{Hoe63}
     Hoeffding, W.: 
     Probability inequalities for sums of bounded random variables. 
     \newblock J.~Amer. Statist. Assoc. \textbf{58}, 13--30 (1963)
   
\bibitem{Kue98}
   K\"unsch, H.R.: 
   The difference between the hypergeometric and the binomial distribution.
   \newblock Note, ETH~Z\"urich (1998)  
   
\bibitem{MBC79}
    McKay, M., Beckman, R., Conover, W.: 
    A comparison of three methods for selecting values of input variables in the analysis of output from a 
    computer code. 
    \newblock Technometrics \textbf{21}, 239--245 (1979)

\bibitem{Nie92} 
   Niederreiter, H.: 
   {R}andom {N}umber {G}eneration and {Q}uasi-{M}onte 
   {C}arlo {M}ethods. 
   \emph{CBMS-NSF Regional Conference Series in Applied
   Mathematics}, vol.~63.
   \newblock SIAM, Philadelphia, 1992.

\bibitem{NW10}
   Novak, E., Wo\'zniakowski, H.: 
   Tractability of Multivariate Problems. Vol. II: 
   Standard Information for Functionals. 
   \newblock EMS Tracts in Mathematics, European Mathematical Society (EMS) (2010)
    
\end{thebibliography}
\end{document}